\newcommand{\graphnodes}{\psset{fillstyle=solid,fillcolor=white,radius=4pt,shadow=true,shadowcolor=black!50}}
\newcommand{\cube}[1]{{\pstThreeDBox[fillstyle=solid,fillcolor=orange,linestyle=solid,RotZ=0](#1)(0,0,1)(0,1,0)(1,0,0)}}
\newcommand{\ignore}[1]{}
\newcommand{\DX}{\mathrm{DX}}
\newtheorem{thm}{Theorem}
\newcommand{\eqref}[1]{(\ref{#1})}
\newcommand{\text}[1]{\mbox{#1}}
\begin{document}

\renewcommand{\theenumi}{(\arabic{enumi})}

\title{The Perimeter of Proper Polycubes}

\author{Sebastian Luther$^1$ and Stephan Mertens$^{1,2}$}

\address{\selectlanguage{ngerman}{$^1$Institut\ f"ur\ Theoretische\ Physik,
    Otto-von-Guericke Universit"at, PF~4120, 39016 Magdeburg,
    Germany}} 

\address{$^2$Santa Fe Institute,
1399 Hyde Park Rd,
Santa Fe, NM 87501,
USA}

\ead{SebastianLuther@gmx.de, mertens@ovgu.de}

\begin{abstract}
 We derive formulas for the number of polycubes of size $n$ and
 perimeter $t$ that are proper in $n-1$ and $n-2$ dimensions.
 These formulas complement computer based enumerations of 
 perimeter polynomials in percolation problems. We demonstrate this by
 computing the perimeter polynomial for $n=12$ in arbitrary dimension $d$.
\end{abstract}

\pacs{
 64.60.ah, 
 64.60.an, 
 02.10.Ox, 
 05.10.-a  
}



\section{Introduction}
\label{sec:intro}

A $d$-dimensional polycube of size $n$ is a set of face-connected
cells in the lattice $\mathbb{Z}^d$. Figure~\ref{fig:polycubes} shows 
all $3$-dimensional polycubes of size $4$. 
Polycubes are a classical topic in recreational
mathematics and combinatorics \cite{golomb:book}. In statistical
physics and percolation theory, polycubes are called
\emph{lattice animals} \cite{stauffer:aharony:book,guttmann:LNP}.

In percolation theory one is interested in counting lattice animals of
a given size $n$ according to their perimeter $t$, i.e., to the number
of adjacent cells that are empty (see Figure~\ref{fig:polycubes}).
If each cell of the lattice is occupied independently with probability $p$, the
average number of clusters of size $n$ per lattice site reads
\begin{equation}
  \label{eq:def-ns}
  \sum_{t} g_{n,t}^{(d)} p^n(1-p)^t\,,
\end{equation}
where $g_{n,t}^{(d)}$ denotes the number of fixed $d$-dimensional polycubes
of size $n$ and perimeter $t$. Fixed polycubes are considered distinct if
they have different shapes or orientations. The $g$'s define the 
\emph{perimeter polynomials} 
\begin{equation}
  \label{eq:perimeter-polynomial}
  P_n^{(d)}(q) = \sum_t g_{n,t}^{(d)} q^t\,. 
\end{equation}
The perimeter polynomials comprise a considerable amount of
information about the percolation problem in $\mathbb{Z}^d$.


Some polycubes that contribute to $g_{n,t}^{(d)}$ span less than $d$
dimensions. A polycube that spans $i$ dimensions is called
\emph{proper} in $i$ dimensions. See Figure~\ref{fig:polycubes} for
examples. Let $G_{n,t}^{(i)}$ denote the number of fixed polycubes of
size $n$ and that are proper in dimension $i$ and that have perimeter
$t$ in $\mathbb{Z}^i$. We then have
\begin{equation}
  \label{eq:lunnon-g}
  g_{n,t}^{(d)} = \sum_{i=1}^{n-1} {d \choose i}\, G_{n,t-2(d-i)n}^{(i)}\,,
\end{equation} 
which expresses the fact that we can choose the $i$ dimensions of a
proper polycube from the $d$ dimensions of the host lattice.  The
upper limit of the sum in \eqref{eq:lunnon-g} reflects the fact that
an $n$-cell polycube cannot span more than $n-1$ dimensions.

\begin{figure}
\centering
\psset{unit=0.04\columnwidth}
\begin{pspicture}(-1,-2.5)(24.5,3)
\psset{RotZ=45}
\pstThreeDPut(0,0,0){ \cube{0,0,-1} \cube{0,0,0} \cube{0,0,1}  \cube{0,0,2}  }
\rput(0,-2){$t={18(2)}$}
\pstThreeDPut(0,3.2,0){ \cube{0,0,0} \cube{0,0,1}  \cube{0,0,2} \cube{0,1,0} }
\rput(3.7,-2){$t={17}(9)$}
\pstThreeDPut(0,6.7,0){ \cube{0,0,0} \cube{0,0,1}  \cube{0,0,2} \cube{0,1,1} }
\rput(7.2,-2){$t={16}(8)$}
\pstThreeDPut(0,10.2,0){ \cube{0,1,0} \cube{0,0,1}  \cube{0,0,2} \cube{0,1,1} }
\rput(10.7,-2){$t={16}(8)$}
\pstThreeDPut(0,14.5,0){ \cube{0,0,0} \cube{0,1,0}  \cube{1,0,0} \cube{1,1,0} }
\rput(14.5,-2){$t={16}(8)$}
\pstThreeDPut(0,18.5,0){ \cube{0,0,0} \cube{0,1,0}  \cube{1,0,0} \cube{0,0,1} }
\rput(18.5,-2){$t={15}(15)$}
\pstThreeDPut(0,22.3,0){  \cube{0,1,0}  \cube{1,0,0} \cube{1,1,0} \cube{0,1,1}}
\rput(22.3,-2){$t={16(16)}$}
\end{pspicture}
\caption{Polycubes of size $n=4$ in $d=3$ dimensions and their perimeter $t$.
 The leftmost polycube is proper in $1$ dimension, the two reighmost polycubes are proper in $3$
  dimensions. All other polycubes are proper in $2$
  dimensions. Numbers in parentheses denote the perimeter of the
  polycube embedded in its proper dimension.
\label{fig:polycubes}}
\end{figure}
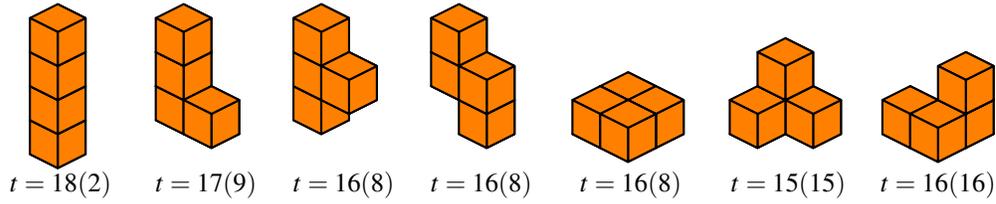

Equation~\eqref{eq:lunnon-g} generalizes a classical formula
due to Lunnon \cite{lunnon:75}, which relates the total number of polycubes
and the total number of proper polycubes of a given size disregarding their
perimeter. Equation~\eqref{eq:lunnon-g} tells us that for given $n$ and $t$, $g_{n,t}^{(d)}$ is a
polynomial in $d$. We can compute this polynomial if we know
the numbers $G_{n,t}^{(1)},\ldots,G_{n,t}^{(n-1)}$. For $n=4$, for
example, one can enumerate the proper polycubes with pencil and paper
to get 
\begin{displaymath}
  G_{4,2}^{(1)} = 1 \qquad G_{4,8}^{(2)} = 9 \qquad G_{4,9}^{(2)} = 8
  \qquad  G_{4,15}^{(3)} = 8 \qquad G_{4,16}^{(3)} = 24\,,
\end{displaymath}
and $G_{4,t}^{(i)}=0$ for all other values of $i$ and $t$. With these
numbers, \eqref{eq:lunnon-g} then yields
\begin{displaymath}
  \fl g_{4,t}^{(d)} = d \delta_{t,8d-6} + \frac{d(d-1)}{2}\left[ 9
    \delta_{t,8d-8} + 8 \delta_{t,8d-7}\right]
  +\frac{d(d-1)(d-2)}{3}\left[9 \delta_{t,8d-9} + 8 \delta_{t,8d-8} \right]\,,
\end{displaymath}
where $\delta_{x,y}$ is the usual Kronecker symbol.

The numbers $G_{n,t}^{(i)}$ can be found by enumerating polycubes on a
computer. Computer based enumerations of polycubes have a long
tradition in computer science and in statistical physics, see
\cite{luther:mertens:11a} and references therein. This approach,
however, is limited by the exponential growth of the number of proper
polycubes with $n$. The hardest problems in terms of
exhaustive enumeration are the ``diagonal'' and ``sub-diagonal"
numbers $G_{n,t}^{(n-1)}$ and $G_{n,t}^{(n-2)}$, but it turns out that
one can compute these numbers using combinatorial arguments instead of
exhaustive enumerations. This is the main result of this paper.

We start by reviewing the link between proper polycubes and edge
labelled trees. In Section~\ref{sec:G1} we demonstrate how this
link can be explored to compute the numbers $G_{n,t}^{(n-1)}$.
The computation of $G_{n,t}^{(n-2)}$ is more involved and requires
some preparation. We start by deriving formulas for the perimeter
of polycubes that correspond to certain classes of edge labeled trees
in Section~\ref{sec:perimeter-formulas}. Then we introduce a sort of
Pruefer code for edge labeled trees (Section~\ref{sec:pruefer}) that
finally allows us to compute the number of edge lebeled trees that
enter the computation of $G_{n,t}^{(n-2)}$. 
In Section~\ref{sec:application} we demonstrate how the combinatorial
arguments complement exhaustive enumerations by computing the
perimeter polynomial $P_{12}^{(d)}(q)$ for arbitrary dimension $d$. 


\section{Proper Polycubes and Tainted Trees}
\label{sec:polycubes-and-trees}

Following Lunnon \cite{lunnon:75}, we denote the total number of
polycubes of size $n$ that are proper in dimension $i$ by $\DX(n,i)$.
Obviously 
\begin{equation}
  \label{eq:1}
  \DX(n,i) = \sum_t G_{n,t}^{(i)}\,.
\end{equation}
A polycube of size $n$ can barely span $n-1$ dimensions. In particular,
in a polycube that contributes to $\DX(n,n-1)$, every pair of adjacent
cells must span a new dimension. There can't be any
loops. Apparently the computation of $\DX(n,n-1)$ is an exercise in
counting trees. Let's work out that exercise.

Consider the adjacency graph of a polycube, i.e, the graph in which
each vertex represents a cell of the polycube and two vertices are
connected if the corresponding cells are neighbors in the polycube.
The edges of this graph are labeled with the dimension along which the two
cells touch each other.

In the case $\DX(n,n-1)$ every edge in the adjacency graph has a
unique label that corresponds to one of the $n-1$ dimensions. Hence
the adjacency graph has exactly $n-1$ edges, i.e., it is an \emph{edge
labeled tree}. And since there are two possible directions for each
dimension, we get
\begin{displaymath}
  \DX(n,n-1) = 2^{n-1} \cdot\,\mbox{number of edge labeled trees of
    size $n$.}
\end{displaymath}

The number of \emph{vertex} labeled trees of size $n$ is given
by $n^{n-2}$, the famous fomula published by Cayley in 1889
\cite{cayley:1889}.  The number of \emph{edge} labeled trees
is smaller by a factor $1/n$, i.e., it is $n^{n-3}$. The earliest
reference for a proof we could find is from 1995 \cite{cameron:95}, but
the formula seems to have been known for much longer. We will give 
another proof in Section~\ref{sec:pruefer}.
Anyway, the number of proper polycubes of size $n$ in dimension $n-1$ reads
\begin{equation}
  \label{eq:dx1}
  \DX(n,n-1) = 2^{n-1}\,n^{n-3}\,.
\end{equation}
This formula has been known and used in the statistical physics community for a
long time \cite{fisher:essam:61}, but a formal proof has been
published only recently \cite{barequet:barequet:rote:10}.

The computation of $\DX(n,n-k)$ for $k>1$ is more complicated because
the correspondence between proper polycubes and
edge labeled trees gets more involved. For $k>1$, the adjacency graph
of a proper polycube can have loops, i.e., it is represented by more
than one spanning tree. On the other hand some labeled spanning trees
represent impossible polycubes with overlapping cells. Yet a careful
consideration of all these cases has yield the formulas for
$\DX(n,n-2)$ \cite{barequet:barequet:rote:10} and $\DX(n,n-3)$
\cite{asinowski:etal:11}. Using non-rigorous arguments, $\DX(n,n-k)$
has been computed up to $k=7$ \cite{luther:mertens:11a}. 
In this contribution we will show how combinatorial arguments
can be used to compute $G_{n,t}^{(n-1)}$ and $G_{n,t}^{(n-2)}$.

\section{Computation of $G_{n,t}^{(n-1)}$.}
\label{sec:G1}

For $G_{n,t}^{(n-1)}$ we need to know 
the perimeter of a polycube represented by a given tree. It turns out
that the perimeter is determined by the degree sequence of the tree:
\begin{thm} \label{thm:perimeter}
  Let $\delta=(\delta_1,\ldots,\delta_n)$ denote the degree sequence of the
  adjacency graph of a proper $n$-cell polycube in $(n-1)$
  dimensions. The perimeter $t_1$ of the polycube depends only on $n$
  and $\delta$ and it is given by
  \begin{equation}
    \label{eq:t-d}
    t_1(\delta) = (2n-1)(n-1)-\frac{1}{2}\sum_{i=1}^n \delta_i^2\,.
  \end{equation}
\end{thm}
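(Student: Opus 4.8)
The plan is to count the empty cells surrounding the polycube in two ways and to read off the degree sequence from the discrepancy. Since the polycube is proper in $n-1$ dimensions, its adjacency graph is a tree with $n-1$ edges, and each of the $n$ cells is a unit cube in $\mathbb{Z}^{n-1}$ bearing $2(n-1)$ facets. Each edge of the tree corresponds to one shared facet and thus hides one facet on each of its two cells, so the number of exposed facets is
\begin{displaymath}
  F = 2(n-1)\,n - 2(n-1) = 2(n-1)^2 .
\end{displaymath}
Writing $m_w$ for the number of occupied cells adjacent to an empty cell $w$, every exposed facet is a unique occupied--empty incidence, so $\sum_w m_w = F$; on the other hand the perimeter counts each empty neighbour exactly once, $t_1 = \#\{w : m_w \ge 1\}$. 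Subtracting gives
\begin{displaymath}
  t_1 = F - \sum_{w}(m_w-1),
\end{displaymath}
so everything reduces to evaluating the overcount $\sum_w (m_w-1)$, the sum running over the empty neighbours of the polycube.

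My central claim is that this overcount equals the number of length-two paths $x\!-\!c\!-\!y$ in the tree, which is $\sum_{i} {\delta_i \choose 2}$. First I would exploit the feature that distinguishes the $(n-1)$-dimensional case: every dimension labels exactly one edge. Tracking the displacement accumulated along a path in the tree, each coordinate can change by at most $\pm 1$, because its dimension is used at most once. Two consequences follow: (i) no two occupied cells can differ by $2e_i$, so the occupied neighbours of any empty cell lie in pairwise distinct dimensions; and (ii) if two occupied cells have a common neighbouring cell, they differ in exactly two coordinates (each by $\pm1$), and the tree path joining them has length exactly two, passing through the fourth cell of the unit square they span.

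These facts set up a bijection between corners and multiply-covered empty cells. Given a corner $x\!-\!c\!-\!y$ with edge labels $\alpha\neq\beta$, the opposite corner $w=x+y-c$ of the unit square cannot be occupied, since an occupied $w$ would close the $4$-cycle $c\!-\!x\!-\!w\!-\!y\!-\!c$ in the tree; thus $w$ is an empty cell adjacent to both $x$ and $y$. Conversely, an empty cell $w$ with $m_w\ge2$ has all its occupied neighbours in distinct dimensions by (i), and any two of them are joined through the opposite corner by (ii), which is therefore occupied. The unique-label property also forbids $m_w\ge3$, since three occupied neighbours of $w$ would force two different edges to carry the same label. Hence $m_w\le2$ throughout, each corner produces a distinct empty cell with $m_w=2$, and every such cell arises from a unique corner, giving $\sum_w(m_w-1)=\#\{w:m_w=2\}=\sum_i {\delta_i \choose 2}$.

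It remains to assemble the pieces. Using $\sum_i \delta_i = 2(n-1)$ for a tree,
\begin{displaymath}
  \sum_i {\delta_i \choose 2} = \frac{1}{2}\sum_i \delta_i^2 - (n-1),
\end{displaymath}
and substituting into $t_1 = 2(n-1)^2 - \sum_w(m_w-1)$ yields $t_1 = 2(n-1)^2 + (n-1) - \frac{1}{2}\sum_i \delta_i^2 = (2n-1)(n-1) - \frac{1}{2}\sum_i \delta_i^2$, as claimed. I expect the main obstacle to be the two displacement arguments of consequences (i) and (ii) together with the bound $m_w\le2$: this is precisely where the hypothesis of being proper in $n-1$ dimensions (one edge per dimension, no loops) must carry the whole weight, and getting the corner bijection exactly right---rather than merely counting facets---is what turns the naive face count $2(n-1)^2$ into the correct perimeter.
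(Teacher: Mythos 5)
Your proof is correct and follows essentially the same route as the paper: both start from the naive facet count $t^\star_{n,n-1}=2(n-1)^2$ and subtract the overcount $\sum_i\binom{\delta_i}{2}$, identifying the multiply-counted perimeter cells with the corners (length-two paths) of the tree. Your version additionally supplies the justifications the paper leaves implicit---the displacement argument showing each shared perimeter cell has exactly two occupied neighbours ($m_w\le 2$), the emptiness of the fourth cell of each spanned unit square, and the resulting bijection---so it is a rigorous elaboration rather than a different argument.
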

\begin{proof}
  If we ignore the fact that some perimeter sites can be shared by
  several cells of the polycube, the perimeter of an $n$-cell polycube 
  in $d$ dimensions would be
  \begin{equation}
    \label{eq:def-tstar}
    t_{n,d}^\star = \sum_{i=1}^n (2d-\delta_i) = 2dn - 2(n-1)\,.  
  \end{equation}
  Now consider a cell of the polycube with degree $\delta_i > 1$. 
  If the polycube is proper in $(n-1)$ dimensions, each
  pair of adjacent cells spans a fresh two dimensional plane. The perimeter
  site that sits in the corner of that plane is shared by two cells of
  the polycube, and these are the only perimeter sites have been overcounted in
  $t_{n,n-1}^\star$. The true perimeter then reads  
  \begin{displaymath}
    t_1(\delta) = t_{n,n-1}^\star - \sum_{i=1}^n {\delta_i \choose 2} =
    (2n-1)(n-1)-\frac{1}{2}\sum_{i=1}^n \delta_i^2\,.
  \end{displaymath}
\end{proof}
We also need the number of edge labeled trees with a given degree
sequence. This number is given by
\begin{equation}
  \label{eq:T}
  T(\delta) = \frac{1}{n}\,\cdot\,\big(\alpha_1(\delta), \ldots,
  \alpha_{n-1}(\delta)\big)! \,\cdot\, \big(\delta_1-1, \ldots, \delta_n-1\big)!
\end{equation}
where $\delta$ represents an \emph{ordered} degree sequence and
$\alpha_k(\delta)$ denotes the number of vertices with degree $k$.
The notation $(n_1,\ldots,n_k)!$ refers to the multinomial coefficient
\begin{equation}
  \label{eq:multinomial}
  (n_1,n_2,\ldots,n_k)! = 
    \frac{(n_1+n_2+\cdots+n_k)!}{n_1!\, n_2!\,\cdots\,n_k!} \quad \text{for
      all $n_i\geq 0$.}
\end{equation}
For later convenience we define $(n_1,\ldots,n_k)!$ to be zero if any of 
the $n_i$'s is negative. Eq.~\eqref{eq:T} is a well known result (see \cite{cameron:95}, for
example). We will rederive it in Section~\ref{sec:pruefer}.
 
\begin{figure}
\begin{algorithm}[H]
   G1($n$)

     \KwIn{size $n$ of polycubes}
      \KwOut{numbers $G_{n,t}^{(n-1)}$ of proper polycubes in
        dimension $n-1$ for all perimeters $t$ }
   \Begin{
   $G_{n,t}^{(n-1)} := 0$ \;
   \ForEach{\textrm{sorted degree sequence $\delta=(\delta_1,\ldots,\delta_n)$}}{
     $t := (2n-1)(n-1) - \frac{1}{2}\sum_i \delta_i^2$ \;
     $G := 2^{n-1}\,n^{-1}\,\big(\alpha_1(\delta), \ldots,
     \alpha_{n-1}(\delta)\big)!\, \big(\delta_1-1, \ldots,
     \delta_n-1\big)!$ \;
     $G_{n,t}^{(n-1)} := G_{n,t}^{(n-1)} + G$ \;
   }
   \Return $G_{n,t}^{(n-1)}$\;
   }
\end{algorithm}
\caption{This algorithm computes the number of polycubes of
  size $n$ that are proper in $n-1$ dimensions for all possible perimeters $t$.}
\label{alg:G1}
\end{figure}

Equipped with \eqref{eq:t-d} and \eqref{eq:T} we can easily compute
$G_{n,t}^{(n-1)}$ for all possible values of $t$ by looping over all
ordered degree sequences of trees of size $n$, see Fig.~\ref{alg:G1}.
The identity $\sum\delta_i = 2|E|=2(n-1)$ tells us that the ordered sequences
$\delta-1$ correspond to the integer partitions of $n-2$. Hence the
number of iterations in our algorithm (Fig.~\ref{alg:G1}) equals the
number of integer partitions of 
$n-2$. There is no closed formula for the number of partitions, but asymptotically the
number of partitions of $n$ scales like \cite{hardy:ramanujan:18} 
\begin{equation}
  \label{eq:2}
  \sim \frac{1}{4n\sqrt{3}} \, \mathrm{e}^{\pi\sqrt{2n/3}}\,.
\end{equation}
Despite this exponential complexity, the algorithm in Fig.~\ref{alg:G1}
is much faster than the equivalent exhaustive enumeration of proper animals.  For
$n=12$, for example, there are only $42$ ordered degree sequences and
the computation of $G_{12,t}^{(11)}$ takes only a tiny
fraction of a second on a laptop. The equivalent explicit enumeration
of lattice animals would take about 25 years of CPU time, see Section~\ref{sec:application}.
years \cite{luther:mertens:11a}. A simple Python script that computes
$G_{n,t}^{(n-1)}$ can be found on the project webpage
\cite{animals:website}.

\section{Perimeter Formulas for $G_{n,t}^{(n-2)}$.}
\label{sec:perimeter-formulas}

Computing the perimeter values for $G_{n,t}^{(n-2)}$ is a bit more
complicated. Again we represent a polycube by edge labeled tree with
$n$ nodes and, in this case, $n-2$ distinct edge labels. In such a
polycube all but one connection between two adjacent cells span a new
dimension.

\begin{table}
\begin{center}
\begin{tabular}{m{0.25in}|m{1.0in}|m{2.7in}|m{0.6in}}
Type & Pattern & Perimeter and Description & Count \\
\hline
\hline
$xx$ & 
\psset{unit=0.04\columnwidth}
\begin{pspicture}(-0.5,-0.5)(4.5,2)

\psdot(0,0)
\psdot(2,0)
\psdot(4,0)
\psline[arrowscale=2.5,linewidth=0.04,ArrowInside=-<,ArrowInsidePos=0.35,ArrowFill=true](0,0)(2,0)
\psline[arrowscale=2,linewidth=0.04,ArrowInside=->,ArrowInsidePos=0.8,ArrowFill=false](0,0)(2,0)
\psline(2,0)(4,0)
\psline[arrowscale=2.5,linewidth=0.04,ArrowInside=-<,ArrowInsidePos=0.35,ArrowFill=true](2,0)(4,0)
\psline[arrowscale=2,linewidth=0.04,ArrowInside=->,ArrowInsidePos=0.8,ArrowFill=false](2,0)(4,0)

\rput(1,0.8){{$\large x$}}
\rput(3,0.8){{$\large x$}}
\end{pspicture}

& $t_{xx}(\delta) = t_2(\delta) + 1$ 
\newline
Antiparallel orientation of $x$ is no legal polycube.
& $2^{n-3}\, T_{xx}(\delta)$\\
\hline

$xyx$ & 
\psset{unit=0.04\columnwidth}
\begin{pspicture}(-1,-0.5)(4,4.5)

\psdot(0,0)
\psdot(3,0)
\psdot(0,3)
\psdot(3,3)

\psline[arrowscale=2.5,linewidth=0.04,ArrowInside=-<,ArrowInsidePos=0.35,ArrowFill=true](0,0)(0,3)
\psline[arrowscale=2,linewidth=0.04,ArrowInside=->,ArrowInsidePos=0.8,ArrowFill=false](0,0)(0,3)

\psline[arrowscale=2.5,linewidth=0.04,ArrowInside=-<,ArrowInsidePos=0.35,ArrowFill=true](3,0)(3,3)
\psline[arrowscale=2,linewidth=0.04,ArrowInside=->,ArrowInsidePos=0.8,ArrowFill=false](3,0)(3,3)

\psline(0,3)(3,3)
\psline[linestyle=dashed](0,0)(3,0)

\rput(-0.5,1.5){{$\large x$}}
\rput(3.5,1.5){{$\large x$}}
\rput(1.5,3.5){{$\large y$}}

\end{pspicture}

& $t_{xyx}(\delta, \delta_a, \delta_b) = t_2(\delta) - (\delta_a - 1)- (\delta_b - 1)$
\newline
Corresponding polycubes have $4$ spanning trees.
& $2^{n-3}\, T_{xyx}(\delta)$
\\
\hline
$xyzx$ & 
\psset{unit=0.04\columnwidth}
\begin{pspicture}(-0.3,0)(5,5)
\psdot(0,0)
\psdot(3,0)
\psdot(4.06066017177982,1.06066017177982)
\psdot[dotstyle=asterisk](1.06066017177982,1.06066017177982)

\psdot[dotstyle=asterisk](0,3)
\psdot(4.06066017177982,4.06066017177982)
\psdot(1.06066017177982,4.06066017177982)

\psline[arrowscale=2.5,linewidth=0.04,ArrowInside=-<,ArrowInsidePos=0.35,ArrowFill=true](0,0)(3,0)
\psline[arrowscale=2,linewidth=0.04,ArrowInside=->,ArrowInsidePos=0.8,ArrowFill=false](0,0)(3,0)

\psline[arrowscale=2.5,linewidth=0.04,ArrowInside=-<,ArrowInsidePos=0.35,ArrowFill=true](1.06066017177982,4.06066017177982)(4.06066017177982,4.06066017177982)
\psline[arrowscale=2,linewidth=0.04,ArrowInside=->,ArrowInsidePos=0.8,ArrowFill=false](1.06066017177982,4.06066017177982)(4.06066017177982,4.06066017177982)

\psline(3,0)(4.06066017177982,1.06066017177982)
\psline(4.06066017177982,1.06066017177982)(4.06066017177982,4.06066017177982)
\psline[linestyle=dashed](0,0)(0,3)
\psline[linestyle=dashed](0,0)(1.06066017177982,1.06066017177982)
\psline[linestyle=dashed](1.06066017177982,1.06066017177982)(1.06066017177982,4.06066017177982)
\psline[linestyle=dashed](0,3)(1.06066017177982,4.06066017177982)
\psline[linestyle=dashed](1.06066017177982,1.06066017177982)(4.06066017177982,1.06066017177982)

\rput(1.5,-0.5){{$\large x$}}
\rput(2.56066017177982,4.56066017177982){{$\large x$}}
\rput(4, 0.3){{$\large y$}}
\rput(4.56066017177982,2.56066017177982){{$\large z$}}

\end{pspicture}

& $t_{xyzx}(\delta) = t_2(\delta) - 1$ & $2^{n-3}\,  T_{xyzx}(\delta)$
\end{tabular} 
\end{center}
\caption{Counting polycubes of size $n$ that are proper in $n-2$
  dimensions. The three patterns that lead to over counting and/or a
  perimeter value that is different form $t_2(\delta)$.  
  $T_{xx}$, $T_{xyx}$ and $T_{xyzx}$  denote the number of undirected,
  edge labeled trees with the corresponding error pattern. Every tree that does not
  contain any of these patterns represents exactly $2^{n-2}$ polycubes
  with perimeter $t_2(\delta)$. Note that flipping the direction of both
  edges with the same label $x$ at the same time yields a polycube that
  must not be counted since it is only the mirror image in the
  $x$-dimension. Hence the factor $2^{n-2}$ for the error free
  case. The factor $2^{n-3}$ in the error classes arises from the fact
that an antiparallel orientiation of the $x$-edge either doesn't
correspond to a legal polycube (case $xx$) or induces no correction to
the default perimeter $t_2(\delta)$ (cases $xyz$ and $xyzx$).}
\label{table:patterns}
\end{table}

In contrast to the previous case there is no longer a one-to-one map
between polycubes and edge labeled trees. We need to take care of the three 
``error pattern'' shown in Table~\ref{table:patterns}.  

Consider the case where two adjacent edges of the tree share the same
label. We call this configuration $xx$. If the directions of these
edges are antiparallel, the corresponding polycube will have
overlapping cells. If their directions are parallel, the polycube is
valid but has a larger perimeter than the error-free case.

An $n$-cell polycube that is proper in $n-2$ dimensions can have a
loop, in which case it is represented by more than one spanning
tree. The loopy polycubes all contain a quadrilateral. The
corresponding tree then contains the pattern $xyx$ of adjacent edge
labels.  Only one fourth of
those trees contribute to the number of polycubes, and their perimeter
is also special.

Taking into account the patterns $xx$ and $xyx$ is enough to correctly
compute $DX(n,n-2)$, but to compute
$G_{n,t}^{(n-2)}$ an additional pattern has to be considered: a
pattern $xyzx$ of adjacent edges represents exactly one polycube, but its
perimeter is different from the perimeter of other trees with the
same degree sequence. But let's start with the most simple, error free case:

\begin{thm} \label{thm:perimeter2}
Let $\delta=(\delta_1,\ldots,\delta_n)$ denote the degree sequence of an directed edge
  labeled tree with $n-2$ distinct edge labels not containing any of
  the error patterns in Table \ref{table:patterns}.
  Then the perimeter of the corresponding polycube is
  \begin{equation}
    \label{eq:t-xx}
    t_2(\delta) = (2n^2 - 5n+ 1) - \frac 12 \sum_{i=1}^n \delta_i^2\,.
  \end{equation}
\end{thm}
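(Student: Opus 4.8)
The plan is to mirror the proof of Theorem~\ref{thm:perimeter}: start from the naive perimeter $t_{n,n-2}^\star = 2(n-2)n - 2(n-1) = 2n^2-6n+2$ (which counts each empty site once for every occupied cell touching it) and subtract the overcounting of shared perimeter sites. Since the tree has $n-1$ edges but only $n-2$ distinct labels, exactly one label---call it $x$---sits on two edges while every other label occurs once. First I would fix coordinates by rooting the tree and summing the signed unit vectors $\pm e_\ell$ along tree paths, so that the displacement between two cells equals the signed edge-sum along the path joining them. The goal is to show the total overcounting is again exactly $\sum_i {\delta_i \choose 2}$, which gives $t_2 = t_{n,n-2}^\star - \sum_i {\delta_i \choose 2} = (2n^2-5n+1) - \frac12\sum_i\delta_i^2$ upon inserting $\sum_i{\delta_i \choose 2} = \frac12\sum_i\delta_i^2 - (n-1)$.

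The heart of the argument is a lemma: in the error-free case the only empty sites adjacent to two or more cells are the familiar ``corners'' of the $(n-1)$-dimensional proof, each touching exactly two cells. To prove it I would use that two cells $c_u,c_v$ share a common empty neighbor $s$ only if $c_v-c_u$ is a sum of at most two signed unit vectors, and that this displacement is the signed edge-sum along the $u$--$v$ path. Because every non-$x$ label appears at most once on any path, its contribution $\pm e_\ell$ cannot cancel; the label $x$ contributes $0$, $\pm e_x$, or $\pm 2e_x$ according to how many $x$-edges lie on the path and with which signs. Matching these constraints against a two-term target forces the path to consist of exactly two edges carrying distinct labels (a standard corner), while every remaining possibility---two $x$-edges at path-distance $0$, $1$, or $2$---is precisely one of the excluded patterns $xx$, $xyx$, $xyzx$.

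Two short riders finish the count. First, each corner site is genuinely empty: were it occupied it would close a unit square, forcing two labels to repeat, which is impossible with a single repeated label $x$ (this is exactly the loop situation of the $xyx$ pattern). Second, no site can touch three cells, since applying the lemma to each of the three pairs around such a site would demand three distinct repeated labels. Hence every corner is a distinct perimeter site of multiplicity $2$, there are $\sum_i{\delta_i \choose 2}$ of them (one per pair of edges at each vertex, the distinctness of labels at a vertex being guaranteed by excluding $xx$), and the formula follows.

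The main obstacle I anticipate is the case analysis inside the lemma: verifying that the only ways the repeated dimension $x$ could manufacture an extra coincidence---overlapping cells, a shared site away from a corner, or a site of multiplicity $\ge 3$---correspond exactly to the three tabulated error patterns and to nothing else. Keeping the bookkeeping of signs and path-distances straight, and confirming that ``error-free'' is strong enough to bar all anomalies yet weak enough to retain the legitimate corners, is where the real work lies; the reduction to $t_{n,n-2}^\star - \sum_i{\delta_i \choose 2}$ is then routine arithmetic.
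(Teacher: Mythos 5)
Your proposal is correct and follows essentially the same route as the paper: start from the naive count $t_{n,n-2}^\star$ and subtract one shared perimeter cell for each of the $\sum_i {\delta_i \choose 2}$ pairs of edges meeting at a common vertex, the key point being that excluding the patterns $xx$, $xyx$, $xyzx$ forces the two equally labeled edges so far apart (path-distance at least $3$) that only the standard corner coincidences of the $(n-1)$-dimensional case survive. The paper compresses this last step into one sentence (the tree ``locally looks like'' a tree with $n-1$ distinct labels), whereas your signed-displacement path analysis and the two riders (corners are genuinely empty, no perimeter site has multiplicity three) supply the rigorous verification that the paper leaves implicit.
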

\begin{proof}
  As with $t_1$ we start with $t_{n,d}^\star$ and subtract the number of
  those perimeter cells that are shared by two cells. Since we excluded all error patterns,
  the two equally labeled edges are far apart from each other. This means that the tree
  locally looks like a tree with $n-1$ distinct edge labels. Consequently all pairs of cells
  that are adajcent to a common cell span a fresh two dimensional plane and hence share a 
  perimeter cell:
  \begin{displaymath}
    t_2(\delta) = t_{n,n-2}^\star - \sum_{i=1}^n {\delta_i \choose 2} =
    (2n^2 - 5n+ 1) - \frac 12 \sum_{i=1}^n \delta_i^2\,.
  \end{displaymath}
\end{proof}

\begin{thm} \label{thm:perimeter_xx}
  Let $\delta=(\delta_1,\ldots,\delta_n)$ denote the degree sequence of an directed edge
  labeled tree with $n-2$ distinct edge labels containing the pattern $xx$. Then the perimeter
  of the corresponding polycube is
  \begin{equation}
    \label{eq:t-xx}
    t_{xx}(\delta) = t_2(\delta) + 1\,.
  \end{equation}
\end{thm}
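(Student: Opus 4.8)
The plan is to piggy-back on the derivation of Theorem~\ref{thm:perimeter2} and merely recount the shared perimeter sites. First I would fix the geometric meaning of the pattern $xx$. The two equally labelled adjacent edges share a common vertex, the \emph{middle cell} $B$, whose two $x$-neighbours are $A$ and $C$. Since the antiparallel orientation is illegal (it would force $A$ and $C$ onto the same lattice site), the two $x$-edges must be parallel, so $A$, $B$, $C$ are three collinear cells, say $A=B-\mathbf{e}_x$ and $C=B+\mathbf{e}_x$. Away from this straight triple the tree still looks locally like a tree with $n-1$ distinct labels, so I would reuse the bookkeeping of Theorem~\ref{thm:perimeter2}: start from the naive perimeter $t_{n,n-2}^\star$ and subtract one site for every pair of neighbours of a common cell, each such pair spanning a fresh plane whose far corner is an empty cell shared by the two neighbours.

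The key observation is that exactly one such pair fails to produce a shared perimeter site, namely the collinear pair $(A,C)$ at the middle cell $B$. For two neighbours $B+d_1\mathbf{e}_{i_1}$ and $B+d_2\mathbf{e}_{i_2}$ the shared corner sits at $B+d_1\mathbf{e}_{i_1}+d_2\mathbf{e}_{i_2}$; for the pair $(A,C)$ this evaluates to $B-\mathbf{e}_x+\mathbf{e}_x=B$, i.e.\ the would-be corner coincides with the occupied middle cell and is therefore not a perimeter site at all. Moreover $A$ and $C$ sit at lattice distance two, so they share no empty neighbour by any other route. Hence the term ${\delta_B \choose 2}$ that Theorem~\ref{thm:perimeter2} subtracts for the middle cell overcounts the genuine shared sites by exactly one, independently of $\delta_B$: the remaining ${\delta_B \choose 2}-1$ pairs at $B$ each combine one of $A,C$ with a \emph{perpendicular} neighbour (or two perpendicular neighbours with each other) and thus still span a plane and contribute their corner.

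Collecting terms, the true number of shared perimeter sites is $\sum_i{\delta_i \choose 2}-1$, so
\begin{displaymath}
  t_{xx}(\delta) = t_{n,n-2}^\star - \left(\sum_{i=1}^n {\delta_i \choose 2} - 1\right) = t_2(\delta)+1\,,
\end{displaymath}
which is the claim. The main obstacle, and the step I would spell out most carefully, is justifying that the collinear pair is the \emph{only} deviation from the error-free count. Concretely, one must verify that the identification of the two edge labels in the $xx$ case introduces no loop (so that every other corner cell remains empty and every other neighbour pair still contributes a genuine shared site) and collapses no two far-apart branches onto adjacent or identical cells (so that no new multiply-covered perimeter site is created). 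This is where the contrast with the $xyx$ pattern matters: there the repeated label folds two branches into a quadrilateral, whereas here the parallel orientation merely extends a straight line, leaving the embedding faithful except at $B$. Once that local faithfulness is established, the $+1$ correction is forced.
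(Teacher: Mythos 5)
Your proposal is correct and follows essentially the same route as the paper: reuse the bookkeeping of Theorem~\ref{thm:perimeter2} and observe that exactly one pair of neighbours at the middle cell --- the two cells joined to it by the equally labelled, necessarily parallel edges --- is collinear rather than plane-spanning, so the shared corner site subtracted in $t_2(\delta)$ does not exist and one unit must be added back. Your version merely spells out more explicitly the coordinate computation of the would-be corner and the check that the two collinear cells share no other empty neighbour, which the paper leaves implicit.
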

\begin{proof}
  The same argument as in Theorem~\ref{thm:perimeter2} applies to all pairs of edges
  that are adjacent to a common cell, except the two equally labeled edges. They don't
  span a plane but form a one dimensional line, hence we have to add
  back the perimeter cell that has been mistakenly subtracted in $t_2(\delta)$.
\end{proof}

\begin{thm} \label{thm:perimeter_xyx}
  Let $\delta=(\delta_1,\ldots,\delta_n)$ denote the degree sequence of an directed edge
  labeled tree with $n-2$ distinct edge labels containing the pattern $xyx$. Let $\delta_a$ and $\delta_b$ be the degrees
  of the two vertices that lie on the ends of the path formed by the two equllay labeled
  edges and the edge separating them. Then the perimeter of the corresponding polycube
  is
  \begin{equation}
    \label{eq:t-xx}
    t_{xyx}(\delta) = t_2(\delta) - \left(\delta_a - 1\right) - \left(\delta_b - 1\right)
  \end{equation}
\end{thm}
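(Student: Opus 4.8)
The plan is to follow exactly the bookkeeping of Theorems~\ref{thm:perimeter2} and \ref{thm:perimeter_xx}: write the true perimeter as $t^\star$ minus the number of empty perimeter cells that are shared by two cells of the polycube, but now using the \emph{true} adjacency degrees rather than the tree degrees, and then correct for the places where the loop geometry deviates from the generic picture. First I would fix coordinates for the offending subtree. Since the two $x$-edges are parallel and are separated by the single $y$-edge, the four cells $u_1,u_2,u_3,u_4$ of the path (with edge labels $x,y,x$) occupy the corners of a unit square in the $xy$-plane. In particular the two path endpoints $u_1,u_4$, whose tree degrees are $\delta_a$ and $\delta_b$, are themselves adjacent (they share a face in dimension $y$), so the adjacency graph of the polycube contains the $4$-cycle $u_1u_2u_3u_4$ and the true degrees of $u_1,u_4$ exceed $\delta_a,\delta_b$ by one.

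Next I would collect the two corrections relative to the error-free case. First, the extra loop face between $u_1$ and $u_4$ raises two of the degrees by one, so the naive perimeter becomes $t^\star=t_{n,n-2}^\star-2$. Second, in the generic argument each cell $i$ contributes ${\delta_i\choose 2}$ shared perimeter cells, one for every pair of neighbours, because each such pair spans a fresh plane whose fourth corner is empty. Around the square this fails for exactly four pairs: at each of the cells $u_1,\dots,u_4$ the pair consisting of its two in-square neighbours spans the $xy$-plane, but the corresponding corner is the \emph{opposite} corner of the square, which is occupied. These four pairs contribute no shared perimeter cell, while every other pair behaves exactly as before. Hence the number of shared cells is $\sum_i{\delta_i'\choose 2}-4$, where $\delta_i'$ is the true degree, equal to $\delta_i$ except that $\delta_a,\delta_b$ are replaced by $\delta_a+1,\delta_b+1$.

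Combining the two corrections and using ${\delta_a+1\choose2}-{\delta_a\choose2}=\delta_a$, so that $\sum_i{\delta_i'\choose2}=\sum_i{\delta_i\choose2}+\delta_a+\delta_b$, I obtain
\[
t_{xyx}=\bigl(t_{n,n-2}^\star-2\bigr)-\Bigl(\textstyle\sum_i{\delta_i\choose2}+\delta_a+\delta_b-4\Bigr)=t_2(\delta)-(\delta_a-1)-(\delta_b-1),
\]
which is the claimed formula.

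The hard part is not this arithmetic but justifying the second correction rigorously, i.e.\ checking that the forced $xy$-square is the \emph{only} coincidence introduced by the repeated label. Concretely I must verify that the closure $u_1u_4$ is the only additional adjacency (no other pair of cells overlaps or touches), and that near the square no empty cell is adjacent to three polycube cells, nor to two cells both of whose common neighbours are empty; either situation would break the one-pair-per-shared-cell correspondence on which the ${\delta_i\choose2}$ count rests. This is precisely the ``general position'' property that already underlies $t_2(\delta)$: since every edge other than the second $x$ carries a distinct dimension, it holds verbatim away from the square, and only the four corners of the square need to be inspected by hand.
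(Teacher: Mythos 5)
Your proof is correct and reaches the paper's formula by the same basic route---recognizing that the $xyx$ pattern closes into a unit square, so that the two path endpoints are truly adjacent, and then correcting the generic perimeter count---but your bookkeeping is organized differently, and it is in fact the more watertight of the two. The paper keeps the tree-based naive count $t^\star_{n,n-2}$ and repairs only the pair counts at the end cells: it replaces ${\delta_a \choose 2}+{\delta_b \choose 2}$ by ${\delta_a+1 \choose 2}+{\delta_b+1 \choose 2}-2$, the $2$ accounting for the two in-square pairs seen from $a$ and from $b$. Read literally, this omits two further effects: the extra adjacency between $a$ and $b$ also lowers the naive face count by $2$ (each of $a$, $b$ gains an occupied neighbour), and the in-square pairs seen from the two \emph{middle} cells of the path are also wrongly subtracted inside $t_2(\delta)$; these two omissions cancel exactly, which is why the paper's formula is nonetheless right. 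Your decomposition---naive count with the true degrees $\delta_i'$ (equal to $\delta_i$ except $\delta_a+1$, $\delta_b+1$), giving $t^\star_{n,n-2}-2$, minus the shared cells $\sum_i{\delta_i' \choose 2}-4$ with all four occupied-corner pairs excluded---tracks both effects explicitly, so nothing is left to cancel silently. You also state explicitly the general-position verification (the square closure is the only extra adjacency, and no perimeter cell is shared by three cells) that the paper leaves implicit here and in its other perimeter theorems, and your remark that this reduces to inspecting the four corners of the square, since all other edges carry distinct labels, is the right justification.
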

\begin{proof}
  Polycubes containg the quadrilateral have four spanning trees. To construct a spanning
  tree for a given polycube, one of the four edges of the quadrilateral has to be removed.
  The missing edge would connect the cells $a$ and $b$. Since the edge is missing from the tree
  there are more pairs of cells that are neighbours than we count in $t_2$. In $t_2$ we subtract
  \begin{displaymath}
    {\delta_a \choose 2 } + {\delta_b \choose 2 }
  \end{displaymath}
  pairs of cells, but if the edge were present we would have subtracted 
  \begin{displaymath}
    {\delta_a + 1\choose 2 } + {\delta_b + 1\choose 2 }
  \end{displaymath}
  Unfortunately this is still not correct, because two of those pairs span the plane of
  the quadrilateral and must not be subtracted,
  \begin{displaymath}
    \eqalign{
    t_{xyx}(\delta) &= t_2(\delta) + {\delta_a \choose 2 } + {\delta_b \choose 2 } - \left[{\delta_a + 1\choose 2 } +
{\delta_b + 1\choose 2 } - 2\right] \\
    &=  t_2(\delta) - \left(\delta_a -
1\right) - \left(\delta_b - 1\right)\,.
    }  
  \end{displaymath}
\end{proof}

\begin{thm} \label{thm:perimeter_xyzx}
  Let $\delta=(\delta_1,\ldots,\delta_n)$ denote the degree sequence of an directed edge
  labeled tree with $n-2$ distinct edge labels containg the pattern $xyzx$. Then the perimeter of
  the corresponding poylcube is
  \begin{equation}
    \label{eq:txyzx-d}
    t_{xyzx}(\delta) = t_2(\delta) - 1\,.
  \end{equation}
\end{thm}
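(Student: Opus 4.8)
The plan is to follow the same strategy as in Theorems~\ref{thm:perimeter2}--\ref{thm:perimeter_xyx}: begin with the over-counting bookkeeping behind $t_2(\delta)$ and determine how the local geometry of the pattern forces a correction. I would fix coordinates along the path of four edges carrying the labels $x,y,z,x$ in this order, say $v_0,v_1,v_2,v_3,v_4$, writing $e_x,e_y,e_z$ for the unit steps in the three dimensions involved. For the orientation drawn in the $xyzx$ row of Table~\ref{table:patterns} the five cells occupy $v_0=0$, $v_1=e_x$, $v_2=e_x+e_y$, $v_3=e_x+e_y+e_z$ and $v_4=e_y+e_z$, so that the endpoints $v_0$ and $v_4$ are separated by the face diagonal $e_y+e_z$.

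Recall that $t_2(\delta)$ subtracts $\sum_i {\delta_i \choose 2}$, i.e. one unit for every corner cell $v_i+d_1+d_2$ spanned by two edges meeting at a common vertex, under the assumption that each such corner is an empty cell touching exactly two cells of the polycube. The heart of the proof is to compare this bookkeeping with the true over-counting produced by the folded path. Since $v_0$ and $v_4$ lie on a face diagonal, they have precisely two common empty neighbours, $e_y$ and $e_z$. I would check that $e_y=v_2-e_x$ is in fact also adjacent to $v_2$, so it is shared by the three cells $v_0,v_2,v_4$; this one cell is simultaneously the corner at $v_1$ (for the pair $v_0,v_2$) and the corner at $v_3$ (for the pair $v_2,v_4$), hence it is subtracted twice in $\sum_i {\delta_i \choose 2}$ -- exactly the right amount for a cell of incidence three. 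The second common neighbour $e_z$, on the other hand, is adjacent only to $v_0$ and $v_4$, which are far apart in the tree; it is therefore not of the form $v_i+d_1+d_2$ and is missed entirely by the corner count.

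Isolating this single uncounted shared cell gives that the true over-counting exceeds $\sum_i {\delta_i \choose 2}$ by exactly one, so $t_{xyzx}(\delta)=t_2(\delta)-1$. To make the argument airtight I would still verify completeness: the analogous diagonal between $v_1$ and $v_3$ is harmless, since there one common neighbour is the occupied cell $v_2$ and the other is a corner already counted once; and any further tree edges attached at the five path vertices carry distinct, fresh labels, so the corners they span point in directions orthogonal to the $x,y,z$ block and cannot coincide with $e_y$, with $e_z$, or with one another. I expect the main obstacle to be precisely this completeness check -- showing that the folding creates one and only one perimeter cell unaccounted for by $t_2$, and in particular that the triple-incidence cell $e_y$ is neither over- nor under-subtracted even though it is counted twice.
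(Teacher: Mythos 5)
Your proposal is correct and takes essentially the same approach as the paper's proof: the two common empty neighbours of the endpoints $v_0,v_4$ of the $xyzx$ path are examined, one of them (your $e_y$) being adjacent to three cells and subtracted twice in $\sum_i{\delta_i\choose 2}$ -- hence correctly accounted -- while the other (your $e_z$) is double-counted but never subtracted, forcing the correction $t_{xyzx}(\delta)=t_2(\delta)-1$. Your explicit coordinates and the completeness check (the $v_1$--$v_3$ diagonal and branches in fresh dimensions) simply make rigorous what the paper's shorter proof asserts in words.
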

\begin{proof}
  The two end vertices of the path connecting the two equally labeled edges have two common neighbouring
  perimeter cells. One of them is accounted for correctly. It is adajcent to the two end points and to another point
  in the polycube and those counted three times. On the other hand it lies in two planes formed by the cells
  of the pattern $xyzx$, which means it is subtracted twice. Only the double counting of the other perimeter cell
  is missed and must be corrected.
\end{proof}

\section{Prüfer-like bijection for edge labeled trees}
\label{sec:pruefer}

In order to characterize and count the edge labeled trees with the patterns
listed in Table~\ref{table:patterns}, we will use a bijection
between edge labeled trees of size $n$ and sequences of integers $0,\dots,n-1$
of length $n-3$ \cite{pikhurko:05}, which  
is very similar to the Prüfer code for vertex labeled trees. En
passant we will prove equations \eqref{eq:dx1} and \eqref{eq:T}.

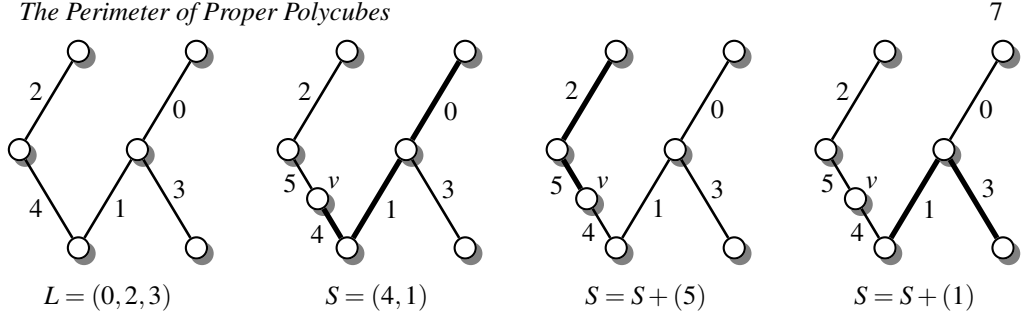
\begin{figure}
  \centering
  \psset{xunit=0.06\textwidth,yunit=0.1\textwidth,labelsep=1pt}
  \begin{pspicture}(0,0.5)(3,3)
     \graphnodes
     \Cnode(1,3){n1} \Cnode(0,2){n2}
     \Cnode(3,3){n3} \Cnode(2,2){n4}
     \Cnode(1,1){n5} \Cnode(3,1){n6}
     \psset{shadow=false,linewidth=1pt}
     \ncline{n1}{n2}\nbput{$2$}
     \ncline{n2}{n5}\nbput{$4$}
     \ncline{n5}{n4}\nbput{$1$}
     \ncline{n6}{n4}\nbput{$3$}
     \ncline{n4}{n3}\nbput{$0$}
     \rput(1.5,0.5){$L = (0,2,3)$}
  \end{pspicture}
  \hfill
  \begin{pspicture}(0,0.5)(3,3)
     \graphnodes
     \Cnode(1,3){n1} \Cnode(0,2){n2}
     \Cnode(3,3){n3} \Cnode(2,2){n4}
     \Cnode(1,1){n5} \Cnode(3,1){n6}
     \Cnode(0.5,1.5){n7} \nput{45}{n7}{$v$}
     \psset{shadow=false,linewidth=1pt}
     \ncline{n1}{n2}\nbput{$2$}
     \ncline{n2}{n7}\nbput{$5$}
    \ncline{n6}{n4}\nbput{$3$}
    \psset{linewidth=2pt}
     \ncline{n7}{n5}\nbput{$4$}
     \ncline{n5}{n4}\nbput{$1$}
     \ncline{n4}{n3}\nbput{$0$}
     \rput(1.5,0.5){$S = (4,1)$}
  \end{pspicture}
  \hfill
  \begin{pspicture}(0,0.5)(3,3)
     \graphnodes
     \Cnode(1,3){n1} \Cnode(0,2){n2}
     \Cnode(3,3){n3} \Cnode(2,2){n4}
     \Cnode(1,1){n5} \Cnode(3,1){n6}
     \Cnode(0.5,1.5){n7}\nput{45}{n7}{$v$}
     \psset{shadow=false,linewidth=1pt}
    \ncline{n6}{n4}\nbput{$3$}
     \ncline{n7}{n5}\nbput{$4$}
     \ncline{n5}{n4}\nbput{$1$}
     \ncline{n4}{n3}\nbput{$0$}
      \psset{linewidth=2pt}
     \ncline{n1}{n2}\nbput{$2$}
     \ncline{n2}{n7}\nbput{$5$}
     \rput(1.5,0.5){$S = S + (5)$}
  \end{pspicture}
   \hfill
  \begin{pspicture}(0,0.5)(3,3)
     \graphnodes
     \Cnode(1,3){n1} \Cnode(0,2){n2}
     \Cnode(3,3){n3} \Cnode(2,2){n4}
     \Cnode(1,1){n5} \Cnode(3,1){n6}
     \Cnode(0.5,1.5){n7}\nput{45}{n7}{$v$}
     \psset{shadow=false,linewidth=1pt}
     \ncline{n7}{n5}\nbput{$4$}
     \ncline{n4}{n3}\nbput{$0$}
     \ncline{n1}{n2}\nbput{$2$}
     \ncline{n2}{n7}\nbput{$5$}
     \psset{linewidth=2pt}
     \ncline{n5}{n4}\nbput{$1$}
     \ncline{n6}{n4}\nbput{$3$}
     \rput(1.5,0.5){$S = S + (1)$}
  \end{pspicture}
  \caption{An example for the Prüfer code for edge labeled trees. The
    code for this tree reads $C=(1,5,1)$.} 
  \label{fig:pruefer}
\end{figure}
 
Let $T$ be a tree with $n$ vertices and edge labels
$0,1,\dots,n-2$, and let $L$ be the sorted list
of labeled edges that are incident to a leaf.
We subdivide the edge ${n-2}$ into two edges by adding a new vertex $v$. 
The new edge which lies on the unique path from $v$ to
the smallest element in $L$ inherits the old label $n-2$. The
other new edge gets the new label $n-1$. See Fig.~\ref{fig:pruefer} 
for an illustration of this step and the rest of the procedure.

We now construct a sequence $S$ of edge labels. Starting from the empty
sequence, repeat the following procedure for each $i\in L$ in
increasing order. Let $i,e_1,e_2,\ldots$ denote the path from $i$
to $v$.  Move along this path and stop when you encounter an edge $e_i
\in\{{n-1},{n-2},S\}$. Append the sequence $(e_i,e_{i-1},\dots,e_1)$
to $S$. 

Note that $S$ always starts with ${n-2}$ and has length $n-2$. We
form the sequence $C(T)$ from $S$ by removing the first entry
${n-2}$. This is the code for the edge labeled tree $T$.

For the reverse construction consider a sequence $C$ of length $n-3$
consisting of integers from the set $\{0,\dots,{n-1}\}$. We obtain a
new sequence $S$ from $C$ by adding a leading ${n-2}$. We define $L=(\ell_1,\ell_2,\dots)$
as the list of all labels not contained in $S$, sorted in increasing
order $\ell_1 < \ell_2 < \dots$. Note that $L$ contains at least two
elements. Obviously an element of $S$ equals ${n-2}$, ${n-1}$ or
some previously occuring element of $S$ exactly $|L|$ times. Cutting
$S$ before each such element gets us subsequences
$S_1,\dots,S_{|L|}$. Note that each $S_i$ begins with ${n-1}$,
${n-2}$ or a label contained in some previous $S_j$, $j<i$.
 
We append $\ell_i$ to $S_i$ to create the
sequence $P_i$. Each $P_i$ represents a path from the interior of the tree
to one of its leafs, and we will use these paths to assemble the tree.

We start with $T$ being the path consisting of ${n-1}$ and
${n-2}$ adjacent to the vertex $v$. For $i=1,\dots,|L|$ we append
the path $P_i$ to $T$ along its unique common edge $e$. Of the two
possible ways for doing this, we choose the one for which the
path connecting $v$ and $\ell_i$ uses the edge $e$.

Finally we remove the vertex $v$ and merge the two edges ${n-1}$ and
${n-2}$ to a single edge with label ${n-2}$. 

This concludes the proof that there is a one-to-one correspondence
between edge-labeled trees of size $n$ and sequences of length $n-3$
in which each element is a number between $0$ and $n-1$. In particular
this proves that the number of edge labeled trees is $n^{n-3}$ and
hence \eqref{eq:dx1}.



The connection between the degree sequence $\delta$ and the code $C$
is given by the following theorem:
\begin{thm} \label{thm:delta-code}
  Let $T$ be an edge labeled tree of size $n$ with code $C$ and degree sequence
  $\delta=(\delta_1,\ldots,\delta_n)$. Then for $i=1,\ldots,n$ the
  sequence $S=(n-2,C)$ contains a label $l_i$ exactly $\delta_i-1$ times and vice versa.
\end{thm}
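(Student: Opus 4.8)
The plan is to transport the classical proof that a vertex occurs in its Prüfer code $\deg-1$ times into the edge-labelled setting. First I would work in the subdivided tree $T'$ (the tree with the extra vertex $v$ and labels $0,\dots,n-1$ from the construction) and root it at $v$. Subdividing the edge $n-2$ only replaces one edge at each of its two endpoints by one edge, so every original vertex keeps its degree and $\deg_{T'}(w)=\delta_w$, while $\deg_{T'}(v)=2$. Rooting at $v$ gives each of the $n$ non-root vertices a unique \emph{parent edge} (the first edge on the path toward $v$); since $T'$ has exactly $n$ edges and $n$ non-root vertices, the map sending a vertex $w$ to the label $l_w$ of its parent edge is a bijection onto $\{0,\dots,n-1\}$. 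This is the correspondence $i\mapsto l_i$ of the theorem, and it makes the ``vice versa'' clause automatic.

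Writing $w(e)$ for the lower (child-side) endpoint of the edge labelled $e$, the label $e$ is the parent edge of exactly the child edges of $w(e)$, of which there are $\deg_{T'}(w(e))-1=\delta_{w(e)}-1$. So the goal reduces to showing that $e$ occurs in $S$ once for each child edge of $w(e)$. The mechanism is a charging scheme. Along the path $i=e_0,e_1,\dots,e_j$ traced from a leaf edge $i$ toward $v$, each $e_{k-1}$ is a child edge of the lower endpoint $w(e_k)$ of the next edge $e_k$. When the construction appends $(e_j,\dots,e_1)$ to $S$, I charge each recorded occurrence of the label $e_k$ to the child edge $e_{k-1}$ immediately below it. Thus every occurrence of a label $e$ in $S$ is paired with a distinct child edge of $w(e)$, and if each edge not incident to $v$ receives exactly one charge, then $e$ occurs in $S$ precisely as often as $w(e)$ has children, namely $\delta_{w(e)}-1$ times.

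The step I expect to be the real obstacle is the invariant that every edge other than the two at $v$ receives exactly one charge. Counting gives ``at least once'' cheaply: a single trace records the $j$ labels $e_1,\dots,e_j$ and issues the $j$ charges to $e_0,\dots,e_{j-1}$, so the total number of charges equals $|S|=n-2$, which is exactly the number of edges not incident to $v$; moreover the two $v$-edges $n-2$ and $n-1$ are stopped at on sight, hence recorded but never charged, so no charge is wasted on them. It therefore suffices to rule out a double charge. For this I would argue that once an edge $f$ has been charged it can never be charged again: either $f$ was the leaf edge $e_0$ of its trace, in which case nothing lies below it and no later path re-enters it from below; or $f$ was an interior edge $e_k$ with $k\ge1$, in which case $f$ was itself recorded and now lies in $S$, so it has become a barrier, and every later path that reaches $f$ stops at $f$ and charges the edge below $f$ rather than $f$ itself. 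Verifying that these stopping rules really do prevent any repeat charge, uniformly over the increasing-order processing of $L$, is the delicate bookkeeping; the rest follows from the counting above. Read through the bijection $i\mapsto l_i$, this is exactly the edge-labelled analogue of the Prüfer degree identity, and combined with the multinomial count of sequences having prescribed label multiplicities it delivers formula~\eqref{eq:T}.
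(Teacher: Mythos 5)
Your proof is correct and takes essentially the same approach as the paper's: the paper's (far terser) proof likewise identifies each vertex $v_i$ with the edge $l_i$ leading from it toward the extra vertex $v$ and asserts that exactly $\delta_i-1$ traced paths pass through $v_i$ and $l_i$, which is precisely what your parent-edge bijection and charging scheme with the ``barrier'' property make rigorous. The only cosmetic difference is that you close the \emph{at least once} direction by the global count $|S|=n-2$ against the $n-2$ non-$v$-edges, whereas the paper implicitly argues locally (one path per child edge of $v_i$); the underlying mechanism is identical.
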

\begin{proof}
  Let $v_i$ be a vertex with degree $\delta_i$. One of its $\delta_i$
  edges lies in the unique path from $v_i$ to the extra vertex
  $v$. Let $l_i$ be the label of that edge. Then there are exactly
  $\delta_i-1$ paths that go through $v_i$ and $l_i$ to reach
  $v$. Hence $S$ contains the label $l_i$ exactly $\delta_i - 1$
  times.
\end{proof}
Theorem~\ref{thm:delta-code} allows us to derive the number of edge
labeled trees with a given degree sequence, i.e., equation
\eqref{eq:T}. See Figure~\ref{fig:T-example} for an example. 
Note that the numbers in $S$ come in groups
of size $\delta_i-1$, and there are $(\delta_1-1, \ldots,
\delta_n-1)!$ different ways to arrange these group elements. This gives the
second multinomial coefficient in \eqref{eq:T}. Next we have to count
the number of ways to assign labels to the different groups in $S$.
Note that codes like $S=(a,a,b,b,b,c,c,c)$ and $S=(a,a,c,c,c,b,b,b)$
correspond to the same set of edge labeled trees. Therefore we must count
the number of maps from $\{1,\dots,n\}$ to $(a,b,c)$ with $b$ and $c$
being indistinguishable. In general, the number of groups in $S$ of
the size $d$ is given by $\alpha_d$. Hence the number of different
label assignments reads $(\alpha_1, \ldots, \alpha_{n-1})!$, which is the
first multinomial coefficient in \eqref{eq:T}. Finally the factor $1/n$
takes into account the fact that the first entry in $S$ is fixed to
the value $n-2$.

\begin{figure}
  \centering
    \psset{xunit=0.06\textwidth,yunit=0.07\textwidth,labelsep=1pt}
    \begin{pspicture}(0,-0.5)(3,3)
     \graphnodes
     \Cnode(2,0){n1} \Cnode(2,1){n3}
     \Cnode(1.2,0.6){n2} \Cnode(2.8,0.6){n4}
     \Cnode(2,2){n5} \Cnode(3,2){n7}
     \Cnode(2,3){n6} \Cnode(1,2){n8}
     \Cnode(0.5,2.7,){n9} \Cnode(0.5,1.3){n10}
     \psset{shadow=false,linewidth=1pt}
     \ncline{n1}{n3}
     \ncline{n3}{n2} \ncline{n3}{n4} \ncline{n3}{n5}
     \ncline{n5}{n6} \ncline{n5}{n7} \ncline{n5}{n8}
     \ncline{n8}{n9} \ncline{n8}{n10}
  \end{pspicture}
  \hspace{0.05\textwidth}
  \begin{minipage}[b]{0.7\textwidth}
    \begin{displaymath}
      \eqalign{
      \delta &= (1,1,1,1,1,1,1,3,4,4)\\
      \alpha &= (7,0,1,2,0,0,0,0,0)
      }
    \end{displaymath}
   According to Theorem~\ref{thm:delta-code} , the code $S$ must be of
   the form
    \begin{displaymath}
          S = (a,a,b,b,b,c,c,c)\,\, \text{or permutations.}
    \end{displaymath}
    There are $(2,3,3)! = 560$ such arrangements. Assignment of labels:
    $\{1,\ldots,10\} \mapsto (a,b,c)$ with $b,c$ indistinguishable.
    There are $(7,1,2)! = 10\cdot9\cdot8/2= 360$ possible assignments.
  \end{minipage}
  \caption{Example for counting the number of edge labelled trees with
    a given degree sequence using the Pruefer code. According to
    Equation~\eqref{eq:T} there are $560\cdot360/10 = 20160$ edge
    labeled trees with the given degree sequence $\delta$.}
  \label{fig:T-example}
\end{figure}

\section{Computing $G_{n,t}^{(n-2)}$}


We are now ready to compute the number of edge labeled trees
containing one of the patterns in Table \ref{table:patterns}
according to their degree sequence. The idea is to express the
patterns as restrictions on the Prüfer code $C$.

First note that two edges now carry the same label. We still use
$0,\ldots,n-2$ for the labels, but with the convention that the label
$n-2$ is identified with one of the other labels. By taking into
account a factor $n-2$, we can fix this shared label to be $0$.

Let's start with the pattern $xx$, i.e., with trees in which two adjacent
edges carry the same label. What are the codes $C$ that
correspond to trees in which the labels $0$ and $n-2$ are assigned to
adjacent edges? Recall that $C$ contains paths from leaf edges to the
edges $n-2$ or $n-1$ (which also represents the edge $n-2$ in the
original tree) or to edges already present in $C$. Hence $0$ and $n-2$
are incident if the first appearance of $0$ in $C$ is of the form
$(\ldots,n-2,0,\ldots)$ or $(\ldots,n-1,0,\ldots)$. If $0$ lies on the
very first path (from the smallest leaf to $n-2$), the ending edge
$n-2$ is removed from $S$, so another possibility is $C=(0,\ldots)$.
If $0$ is a leaf edge (i.e., $0\not\in C$), then it is the smallest
leaf edge and hence the origin of the very first path in $C$. If $0$
and $n-2$ are incident, there are no edges for the second path to
terminate other than $n-2$ or $n-1$. Hence $C=(n-2,\ldots)$ or
$C=(n-1,\ldots)$. In summary, the pattern $xx$ is represented by these
five classes of $C$:
\begin{enumerate}
  \item $C=(n-2,\dots)$, $0\not\in C$
  \item $C=(n-1,\dots)$, $0\not\in C$
  \item $C=(0,\dots)$
  \item $C=(\dots,n-2,0,\dots)$ for the first occurence of $0$ in $C$
  \item $C=(\dots,n-1,0,\dots)$ dto.
\end{enumerate}

Let $T_{xx,1}(\delta)$ denote the number of edge labeled trees
in class $1$.  We illustrate the computation of $T_{xx,1}(\delta)$
using the example from Figure~\ref{fig:T-example}. The first two
entries of $S$ must be equal, hence $S$ must be of the form 
\begin{displaymath}
  S = \left\{\begin{array}{ll}
    (a,a;\, b,b,b,c,c,c) & \text{with $(3,3)! = 20$ arrangements,
      or} \\
   (b,b;\, a,a,b,c,c,c) & \text{with $(1,2,3)! = 60$ arrangements.}
  \end{array}\right.
\end{displaymath}
Note that the part left of the semicolon is not to be rearranged and
that the vertex in which the edges $0$ and $n-2$
meet has either degree $3$ or degree $4$. In general, it can have
any degree $d\geq3$, and  
the corresponding number of arrangements is given by
\begin{equation}
  \label{eq:S-arrangements-xx1}
  (\delta_1-1, \ldots, \delta_n-1)! \,\frac{(d-2)(d-1)}{(n-2)(n-3)}\,,
\end{equation}
where the factor $(d-2)(d-1)/(n-2)(n-3)$ takes care of the fact that two
edges of some vertex with degree $d$ are no longer taking part in the
rearrangements in $S$.

We also need to count the number of ways to assign labels to the
elements of $S$. The value of the first two entries of $S$ is fixed to
$n-2$, and the value $0$ is not allowed in $S$. This leaves us
with $n-2$ possible values to be assigned to the other entries of $S$.
In our example we have
\begin{displaymath}
    S =
  \left\{\begin{array}{ll}
    (a,a;\, b,b,b,c,c,c) & \text{with $8\cdot 7/2 = 28$ label assignments, and
      or} \\
   (b,b;\, a,a,b,c,c,c) & \text{with $8\cdot 7 = 56$ asignments.}
  \end{array}\right.
\end{displaymath}
In general the number of label assigments for the class $xx,1$ is
given by
\begin{equation}
  \label{eq:assignments-xx1}
  (\alpha_1-1,\alpha_2,\ldots,\alpha_d-1,\ldots,\alpha_{n-1})! =
  (\alpha_1,\ldots,\alpha_{n-1})!\,\frac{\alpha_1 \alpha_d}{n(n-1)}
\end{equation}
if $d\geq 3$ is the degree of the vertex incident to the two edges with
the same label.
Combining \eqref{eq:S-arrangements-xx1}, \eqref{eq:assignments-xx1}
and \eqref{eq:T} 
and taking into account the factor $n-2$ from fixing the shared label
to be $0$ provides us with
\begin{equation}
  \label{eq:Txx1}
  \fl
  \eqalign{
  T_{xx,1}(\delta) &= \frac{(\delta_1-1, \ldots, \delta_n-1)!}{n-3}
  \sum_{d=3}^{n-1} (d-2)(d-1)\,(\alpha_1-1,
  \alpha_2,\ldots,\alpha_d-1,\ldots,\alpha_{n-1})!\\
  &= T(\delta)\,\frac{\alpha_1}{(n-1)(n-3)}\sum_{d=3}^{n-1}
  (d-2)(d-1)\,\alpha_d\,.
  }
\end{equation}

The second case is similar except for the fact that we need to
consider two degrees $d$ and $d'$. As above, $d$ denotes the
degree of the vertex that joins edges $0$ and $n-2$. The degree
$d'$ denotes the degree of the vertex that connects the edge $n-1$
to its first path in $S$. The number of arrangements of labels in $S$
that is comaptible with $S=(n-2,n-1,\ldots)$ then reads
\begin{equation}
  \label{eq:S-arrangements-xx2}
  (\delta_1-1,\ldots,\delta_n-1)! \cdot \frac{(d-1)(d'-1)}{(n-2)(n-3)}\,,
\end{equation}
and the number of label assigments is given by
\begin{equation}
  \label{eq:assignments-xx2}
  (\alpha_1,\ldots,\alpha_{n-1})!\cdot\frac{\alpha_1}{n(n-1)(n-2)}\cdot
  \left\{\begin{array}{ll}
    \alpha_d \alpha_{d'} & \text{if $d\neq d'$,} \\
    \alpha_d(\alpha_d-1) & \text{if $d=d'$.}
  \end{array}\right.
\end{equation}
Combining \eqref{eq:S-arrangements-xx2} and \eqref{eq:assignments-xx2}
and using the identity
\begin{equation}
  \label{eq:sum-d-alphad}
  \sum_{d=1}^{n-1} (d-1)\alpha_d = n-2
\end{equation}
provides us with
\begin{equation}
  \label{eq:Txx2}
  T_{xx,2}(\delta) =
  T(\delta)\,\frac{\alpha_1}{(n-1)(n-2)(n-3)}\,\left[(n-2)^2 - \sum_{d=2}^{n-1}(d-1)^2\alpha_d\right]\,.
\end{equation}

For the third case $S=(n-2,0,\ldots)$ we need to consider the two degrees
$d$ and $d'$ of the vertices incident to the edge $0$. The number of
arrangements in $S$ is again given by
\eqref{eq:S-arrangements-xx2}. We no longer have the constraint
$0\not\in S$, hence the number of label assignments is
\begin{equation}
  \label{eq:assignments-xx3}
  (\alpha_1,\ldots,\alpha_{n-1})!\cdot\frac{1}{n(n-1)}\cdot
  \left\{\begin{array}{ll}
    \alpha_d \alpha_{d'} & \text{if $d\neq d'$,} \\
    \alpha_d(\alpha_d-1) & \text{if $d=d'$,}
  \end{array}\right.
\end{equation}
which gets us
\begin{equation}
  \label{eq:Txx3}
  T_{xx,3}(\delta) =
  T(\delta)\,\frac{1}{(n-1)(n-3)}\,\left[(n-2)^2 - \sum_{d=2}^{n-1}(d-1)^2\alpha_d\right]\,.
\end{equation}

For the fourth class of $xx$-patterns we again consider the degrees $d$ and
$d'$ of the vertices incident to the edge with label $0$. Let $d'$
denote the degree of the vertex that is not incident to the edge
$n-2$. Then $0$ appears $d'-1$ times in $S$. If $j$ denotes the
position of the first $0$ in $S$ there are
\begin{displaymath}
  {n-2-j} \choose {d'-2}
\end{displaymath}
ways to distribute the other $0$'s to the right of position $j$.
Now $j\geq 3$ (the case $j=2$
corresponds to the previous class $xx,3$) and the total number of
arrangements of the $0$'s in $S$ reads
\begin{equation}
  \label{eq:position-count}
  \sum_{j=3}^{n-2} {{n-2-j} \choose {d'-2}} = \sum_{k=0}^{n-5} 
  {k \choose{d'-2}} = {{n-4}\choose{d'-1}}\,.
\end{equation}
Besides all the $0$'s, we have also two entries $n-2$ in $S$ with fixed
positions, both with degree $d$. Hence the total number of
arrangements in $S$ is
\begin{equation}
  \label{eq:S-arrangements-xx4}
  \fl (\delta_1-1,\ldots,\delta_n-1)! \cdot \frac{(d-1)(d-2)}{(n-2)(n-3)}
  \cdot \underbrace{\frac{(d'-1)!\,(n-4-(d'-1))!}{(n-4)!} \cdot  {{n-4}\choose{d'-1}}}_{=1}
\end{equation} 
The number of label assignments is again given by
\eqref{eq:assignments-xx3}, and combining these counts we get
\begin{equation}
  \label{eq:Txx4}
  T_{xx,4}(\delta) =
  T(\delta)\,\frac{n-1-\alpha_1}{(n-1)(n-3)}\,\sum_{d=3}^{n-1}(d-1)(d-2)\,\alpha_d\,.
\end{equation}

The fifth and final class of the error pattern $xx$ is characterized
by the degrees $d$ and $d'$ of the endpoints of edge $0$ and $d''$, the
degree of the endpoint of edge $n-2$ that is not incident to edge
$n-2$. The argument with the first position of the $0$ entry in $S$
works exactly as above and leaves us with
\begin{equation}
  \label{eq:S-arrangements-xx5}
  (\delta_1-1,\ldots,\delta_n-1)! \cdot \frac{(d-1)(d''-1)}{(n-2)(n-3)}
\end{equation} 
for the number of arrangements in $S$. The number of label assignments
reads
\begin{equation}
  \label{eq:assignments-xx5}
  \frac{(\alpha_1,\ldots,\alpha_{n-1})!}{n(n-1)(n-2)}\cdot
  \left\{\begin{array}{ll}
    \alpha_d \alpha_{d'} \alpha_{d''}& \text{if $d\neq d' \neq d''$ pairwise,} \\
    \alpha_d(\alpha_d-1)\alpha_{d'} & \text{if $d=d''$ and $d\neq
      d'$,}\\
    \alpha_d(\alpha_d-1)\alpha_{d''} & \text{if $d=d'$ and $d\neq
      d''$,}\\
    \alpha_{d'}(\alpha_{d'}-1)\alpha_{d} & \text{if $d'=d''$ and $d'\neq
      d$,}\\
    \alpha_d(\alpha_d-1)(\alpha_d-2) & \text{if $d=d'=d''$.}
  \end{array}\right.
\end{equation}
Combining these counts provides us with
\begin{equation}
  \label{eq:Txx5}
  T_{xx,5}(\delta) =
  T(\delta)\,\frac{n-2-\alpha_1}{(n-1)(n-2)(n-3)}\,\left[(n-2)^2-\sum_{d=2}^{n-1}(d-1)^2\,\alpha_d\right]\,.
\end{equation}
Now the total number of edge labeled trees in this error class is the sum
$T_{xx} = T_{xx,1} + \cdots + T_{xx,5}$, which yields the
surprisingly simple expression
\begin{equation}
  \label{eq:Txx}
  T_{xx}(\delta) = 
  (n-2) T(\delta)\,\frac{\sum_d {d \choose 2} \alpha_d}{{{n-1} \choose 2}}\,,
\end{equation}
With the benefit of hindsight, this formula could have been written
down without going through all the combinatorial arguments above:
$(n-2) T(\delta)$ is the number of edge labeled trees with $n-2$
distinguishable labels, and the other factor is the fraction of all
pairs of edges that are incident to a common vertex (with degree $d$,
sum over $d$). 

The reason we went through all the combinatorics to arrive at the
simple formula \eqref{eq:Txx} is to demonstrate the principle. It
turns out that for $T_{xyx}$ and $T_{xyzx}$, there are no such simple
formulas, and going through the various classes of constraints on $C$
seems to be the only way to compute $T_{xyx}$ and $T_{xyzx}$. The
actual computation is similar in spirit to the computation of
$T_{xx}$, but the number of classes is larger and the corresponding
formulas are more complicated. We present the results in
\ref{sec:Txyx} and \ref{sec:Txyzx}. The corresponding formulas have
been implemented in a Python script that computes $G_{n,t}^{(n-2)}$
for given $n$. This script (and the script for $G_{n,t}^{(n-1)}$) can
be found on the project webpage \cite{animals:website}.

\section{Application}
\label{sec:application}

To compute the perimeter polynomial $P_n^{(d)}(q)$ for arbitrary
dimension $d$, it suffices to know the numbers $G_{n,t}^{(i)}$ for
$i=1,\ldots,n-1$. Since it is more efficient to enumerate lattice
animals without keeping track of their spanning dimensionality,
computer enumerations usually yield $g$ rather than $G$. 
But we can compute $G$ from $g$ because \eqref{eq:lunnon-g} can be
inverted \cite{call:velleman:93}:
\begin{equation}
  \label{eq:lunnon-invers}
  G_{n,t}^{(i)} = \sum_{d=1}^{i} (-1)^{i-d} {i \choose d}\, g^{(d)}_{n,t+2(d-i)n}\,.
\end{equation}

\begin{table}
    \centering
    \begin{tabular}{crrrrrrrrr}
      $d = $ & 2 & 3 & 4 & 5 & 6 & 7 & 8 & 9 & 10 \\
      $n \leq $ & 22 & 18 & 15 & 14 & 14 & 13 & 11 & 11 & 11
    \end{tabular}
    \caption{Parameters for which enumeration data for $g_{n,t}^{(d)}$
      are available
      \cite{mertens:90,luther:mertens:11a}.}
    \label{tab:data}
\end{table}

The available enumeration data (Table~\ref{tab:data}) suffices to
compute $P_n^{(d)}(q)$ for $n\leq 11$. For $n=12$, we need additional
data for the number of lattice animals of size $12$ in dimensions $8$,
$9$, $10$ and $11$, but thanks to our combinatorial results we need
to run exhaustive enumerations only in dimensions $8$ and $9$. How
much computation time does this save us?

For fixed value of $n$, the number of polycubes to be enumerated grows
only polynomially with $d$, but we are talking about large
numbers here. Let
\begin{equation}
  \label{eq:Ad}
  A_d(n) = \sum_t g_{n,t}^{(d)}
\end{equation}
the total number of polycubes of size $n$ in dimension $d$. These
numbers are known for all values of $d$ and $n\leq 14$
\cite{luther:mertens:11a}. In particular,
\begin{displaymath}
  \eqalign{\frac{A_{10}(12)}{A_9(12)} &=
  \frac{\numprint{7412808050184625}}{\numprint{2001985304489169}} \simeq 3.7 \\
  \frac{A_{11}(12)}{A_9(12)} &=
  \frac{\numprint{23882895566952721}}{\numprint{2001985304489169}} \simeq 11.9\,.}
\end{displaymath}
Our combinatorial method saves us about $15.6$ times the CPU time of
the largest enumeration task $n=12$ and $d=9$. 
The enumeration algorithm takes about 100 clock cycles to generate a
polycube \cite{luther:mertens:11a}, which implies a counting rate of roughly \numprint{3e7}
polycubes per second on a 3 GHz CPU. The enumeration for $n=12$ and
$d=9$ takes 2.12 years on a single CPU, and our combinatorics
saves us 33 years of CPU time. In practice, the enumeration is of
course parallelized and run on hundreds of CPUs, reducing the
wallclock time from ``years'' to ``days.''  In this setup, our
combinatorics still saves us about one months of computation.

The new (and the previous) data for $g_{12,t}^{(d)}$ and $G_{12.t}^{(d)}$ for
$d=8,9,10,11$ is available on the project website \cite{animals:website}.

What about the perimeter polynomials for $n=13$ and arbitrary $d$? The
combinatorics provide the data for $d=12$ and $d=11$, but we would still
need to explicitely enumerate
\begin{displaymath}
  \eqalign{
  A_8(13) &= \phantom{8}\numprint{13228272325440164} \quad
  \phantom{2} (14\,\mbox{years})\\
  A_9(13) &= \phantom{8}\numprint{67341781440810531} \quad  \phantom{2}(71\,\mbox{years})\\
  A_{10}(13) &= \numprint{282338750889253800} \quad  (298\,\mbox{years})}
\end{displaymath}
polycubes. The corresponding estimates of single CPU times are given
in parentheses. Note that our combinatorial results here save us
about 1073 years (for $A_{11}(13)$) and 3416 years (for $A_{12}(13)$) of CPU
time, but the remaining enumeration task
is still out of reach unless one is willing to devote a substantial number
of CPUs and time to accomplish it.

\section{Conclusions}

We have derived formulas for $G_{n,t}^{(n-1)}$ and  $G_{n,t}^{(n-2)}$,
the number of lattice animals of size $n$ and perimeter $t$ that are proper in dimension
$n-1$ or $n-2$. These formulas complement data from exhaustive
enumerations. In particular, they replace the hardest
enumeration tasks by an evaluation of these formulas, which is much
faster. The approach outlined in this paper can be extended to derive
formulas for  $G_{n,t}^{(n-k)}$ for $k > 2$, but the complexity
increases considerably with $k$ since more and more ``error patterns''
need to be identified and analysed. 


\appendix

\section{The case $T_{xyz}$}
\label{sec:Txyx}

For the $xyx$ case we have:
\begin{enumerate}

\item $C=(1)(n-2)\dots$, $0\not\in C$, free variables: $d_{n-1}$
 \item $C=(1)(n-1)\dots$, $0\not\in C$, free variables: $d_{n-1}$
 \item $C=(1)(1)\dots$, $0\not\in C$, free variables: $d_{n-1}$
 
 \item $C=(1)(0)\dots$, free variables: $d_0, d_{n-1}$
 \item $C=(1)\dots(1)(0)\dots$, first $0$ at position $j$, no $0$ in the first $\dots$, free variables: $d_0, d_{n-1}$
 \item $C=\dots(n-2)(1)(0)\dots$, first $0$ at position $j$, no $0$ and $1$ in the first $\dots$, free variables: $d_0, d_{n-1}$
 \item $C=\dots(n-1)(1)(0)\dots$, first $0$ at position $j$, no $0$ and $1$ in the first $\dots$, free variables: $d_0, d_{n-2}$

 \item $C=\dots(n-2)(1)\dots(1)(0)\dots$, first $1$ at position $k$, first $0$ at position $j$, no $0$ and $1$ in the first $\dots$, no $0$ in the second $\dots$, free variables: $d_0, d_{n-1}$
 \item $C=\dots(n-1)(1)\dots(1)(0)\dots$, first $1$ at position $k$, first $0$ at position $j$, no $0$ and $1$ in the first $\dots$, no $0$ in the second $\dots$, free variables: $d_0, d_{n-2}$

\end{enumerate}

One thing that is special about the $xyx$ case is that we have to specify the degrees of the two vertices at the end of
the path that that connects the equally labeled edges. One of these two vertices is always $0$ and the other one is either $n-1$ or $n-2$. This
means we must not sum over the degree of these nodes, but leave their degree
as free argument in the $T_{xyx, i}$ formulas. This is neccesary because the perimeter of the corresponding
polycube depends on these two degrees, as explained earlier.

To shorten the formulas we introduce the following variables:
\begin{equation}
  \label{eq:Xk}
  X_k(\delta) = \sum_{d=1}^{n-1} d^k \alpha_d
\end{equation}
From now on, we will omit the dependence on $\delta$ and just write $X_2$, $X_3$, etc.
\begin{equation}
  \label{eq:Txyx1}
  \eqalign{
  \fl T_{xyx,1}(\delta, d_{n-1}) = T_{xyx,3}(\delta, d_{n-1}) = 
   \frac{T(\delta)\,\alpha_1}{(n-1)(n-2)(n-3)(n-4)}  \cdot \Big(4n^2 - 6n + 2 -X_2(n + 2)\\
   \fl +X_3+(2d_{n-1}^3 - d_{n-1}^2 n - X_2(d_{n-1} - n -3) - 6d_{n-1}^2 +
     7d_{n-1}n - 4n^2 - X_3 - 2d_{n-1}
     +4)\alpha_{d_{n-1}}\Big)\,,
}
\end{equation}
\begin{equation}
  \label{eq:Txyx2}
  \fl T_{xyx,2}(\delta, d_{n-1}) = T(\delta)\,\alpha_1 \,\alpha_{d_{n-1}} (d_{n-1}-1) \, \frac{n^2  + (1- 2d_{n-1})n - 2 + 2d_{n-1}^2 - X_2}{(n-1)(n-2)(n-3)(n-4)}\,,
\end{equation}
where $1\leq d_{n-1}\leq n-1$.
\begin{equation}
  \label{eq:Txyx4}
  \eqalign{
  \fl T_{xyx,4}(\delta, d_0, d_{n-1})+T_{xyx,5}(\delta, d_0, d_{n-1})
  = \frac{T(\delta)\,\alpha_{d_0}}{(n-1)(n-2)(n-3)(n-4)}\cdot \\
  \fl\quad \Big(
  -12d_0^3 - (d_0 - 5)n^2 + X_2(3d_0
    - n - 5) + 30d_0^2 
    + 3(2d_0^2- 7d_0 + 3)n \\
   \fl\quad\phantom{\Big(} + \big[4d_0^3 +
    3d_0^2d_{n-1} + 3d_0d_{n-1}^2 +2d_{n-1}^3 + (d_0 - 5)n^2 -
    X_2(2d_0 + d_{n-1} - n - 5) \\
   \fl\quad\phantom{\Big(} - 11d_0^2 -10d_0d_{n-1} - 9d_{n-1}^2 -
    (3d_0^2 + 2d_0d_{n-1} + d_{n-1}^2 -12d_0 - 9d_{n-1} + 9)n\\
   \fl\quad\phantom{\Big(} - X_3 +
    3d_0 + 5d_{n-1} + 8\big]\alpha_{d_{n-1}}+ X_3 - 8d_0 -
    8\Big)\,,
  }
\end{equation}
\begin{equation}
  \label{eq:Txyx6}
  \eqalign{
  \fl T_{xyx,6}(\delta, d_0, d_{n-1})+T_{xyx,8}(\delta, d_0, d_{n-1}) =
  \frac{T(\delta)\,\alpha_{d_0}}{(n-1)(n-2)(n-3)(n-4)}\cdot\\
  \fl\quad\Big(-6 d_0^3 + X_2 (2 d_0 - n - 4) + 20
    d_0^2 + 2 (d_0^2 - 7 d_0 + 3) n + 4n^2 \\
   \fl\quad\phantom{\Big(} + \big[2 d_0^3 + d_0^2 d_{n-1}
    + d_0 d_{n-1}^2 + 2 d_{n-1}^3 - X_2 (d_0+ d_{n-1} - n - 4) - 7
    d_0^2 - 6 d_0 d_{n-1} \\
   \fl\quad\phantom{\Big(} - 7 d_{n-1}^2 - (d_0^2 +d_{n-1}^2 - 7 d_0 -
    7 d_{n-1} + 6) n - 4 n^2 - X_3 + 3 d_0 + 3 d_{n-1} +6\big]
    \alpha_{d_{n-1}} \\
   \fl\quad\phantom{\Big(} + X_3 - 6 d_0 - 6\Big)\,,
}
\end{equation}
where $2 \leq d_0 \leq n-1$ and $1 \leq d_{n-1} \leq n-1$.
\begin{equation}
  \label{eq:Txyx7}
  \eqalign{
  \fl T_{xyx,7}(\delta, d_0, d_{n-2})+T_{xyx,9}(\delta, d_0, d_{n-2}) = \frac{T(\delta)\,\alpha_{d_0}}{(n-1)(n-2)(n-3)(n-4)}\cdot\\
  \fl\quad\Big(-6 d_0^3 - (d_0 - 1) n^2 + X_2 (d_0
    - 1) + 10 d_0^2 + (4 d_0^2 - 7 d_0 + 3) n\\
   \fl\quad\phantom{\Big(} + \big[2 d_0^2 d_{n-2} + 2
    d_0 d_{n-2}^2 + 2 d_{n-2}^3 + (d_{n-2} - 1) n^2 - 2 d_0^2 - X_2
    (d_{n-2} - 1) - 4 d_0 d_{n-2}\\
   \fl\quad\phantom{\Big(}  - 4 d_{n-2}^2 - (2 d_0 d_{n-2} + 2
    d_{n-2}^2 - 2 d_0 - 5 d_{n-2} + 3) n + 2 d_0 + 2\big] \alpha_{d_{n-2}}
    - 2 d_0 - 2\Big)\,,
}
\end{equation}
where $2 \leq d_0 \leq n-1$ and $2 \leq d_{n-2} \leq n-1$.

Combining the above equations yields for all $d, d_0 \geq 1$:
\begin{equation}
 \label{eq:Txyx_d0_d}
\eqalign{
 \fl T_{xyx}(\delta, d_0, d) = \frac{T(\delta)\,\alpha_{d_0} (\alpha_{d} -
 \delta_{d_0,d})}{(n-1)(n-2)(n-3)(n-4)} \, \Big(6 \, d^{3} + 2 \, {\left(3 \, d - 10\right)}
   d_{0}^{2} + 6 \, d_{0}^{3} \\
  \fl\quad+ (d + d_{0} - 10) n^{2} -
   (3 \, X_{2} - 8) d - 20 \, d^{2} + (6 \, d^{2} -
       3 \, X_{2} - 20 \, d + 8) d_{0} \\
 \fl\quad- {\big[4 \, d^{2} +
       (4 \, d - 21) d_{0} + 4 \, d_{0}^{2} - 2 \, X_{2}
       - 21 \, d + 18\big]} n + 10 \, X_{2} - 2 \, X_{3} +
   16\Big).
}
\end{equation}
Since we need the number of all edge labeled trees in the class $xyx$ to compute the number of error free trees, we compute this formula as well. Combining the above formulas after summing over the free variables results in 
\begin{equation}
  \label{eq:Txyx}
  T_{xyx}(\delta) =
  T(\delta)\ \frac{- 6 n^2 +10 n - 4 + 2 X_2 (n + 1)  - 2 X_3 }{(n-1)(n-2)}.
\end{equation}

\section{The case $T_{xyzx}$}
\label{sec:Txyzx}

For the $xyzx$ case we have the following restrictions
\begin{enumerate}
\item $C=(1)(2)(n-2)\dots$, $0\not\in C$
\item $C=(1)(2)(n-1)\dots$, $0\not\in C$
\item $C=(1)(2)(1)\dots$, $0\not\in C$
\item $C=(1)(2)(2)\dots$, $0\not\in C$

\item $C=(1)(2)(0)\dots$
\item $C=(1)\dots(1)(2)(0)\dots$
\item $C=(1)(2)\dots(2)(0)\dots$
\item $C=(1)\dots(1)(2)\dots(2)(0)\dots$

\item $C=\dots(n-2)(1)(2)(0)\dots$
\item $C=\dots(n-2)(1)\dots(1)(2)(0)\dots$
\item $C=\dots(n-2)(1)(2)\dots(2)(0)\dots$
\item $C=\dots(n-2)(1)\dots(1)(2)\dots(2)(0)\dots$
\item $C=\dots(n-1)(1)(2)(0)\dots$
\item $C=\dots(n-1)(1)\dots(1)(2)(0)\dots$
\item $C=\dots(n-1)(1)(2)\dots(2)(0)\dots$
\item $C=\dots(n-1)(1)\dots(1)(2)\dots(2)(0)\dots$
\end{enumerate}
The first four restrictions are quiete similar to the case $xx_1$,
except that there are two more fixed positions.
Like label $0$, these labels could be any label except the one choosen for $0$. To correct for this have to multiply all counts
by $(n-3)(n-4)$ like we did with $(n-2)$ for label $0$.
\begin{equation}
  \label{eq:Txyzx1}
  \eqalign{
  \fl T_{xyzx,1}(\delta) = T_{xyzx,3}(\delta) = T_{xyzx,4}(\delta) = \\
  \fl\quad T(\delta)\,\alpha_1 \, \frac{-4 \, n^{3} - 6 \, n^{2} + 22 \, n - 12
    + X_{2} {\left(n^{2} + 11 \, n - 4\right)} -2 \, X_{3} {\left(n +
        3\right)} + 2 \, X_{4}}{(n-1)(n-2)(n-3)(n-5)}.
}
\end{equation}
\begin{equation}
  \label{eq:Txyzx2}
   \eqalign{
  \fl T_{xyzx,2}(\delta) =\\
  \fl\quad T(\delta)\,\alpha_1 \, \frac{n^{4} + 10 \, n^{3} - 5 \, n^{2} - 22
    \, n + 16 -6 \, X_{2} {\left(n^{2} + 3 \, n - 2\right)}  + 8 \,
    X_{3} {\left(n + 1\right)} -6 \,
    X_{4}}{(n-1)(n-2)(n-3)(n-4)(n-5)}.
}
\end{equation}
\begin{equation}
  \label{eq:Txyzx5}
   \eqalign{
  \fl
  T_{xyzx,5}(\delta)+T_{xyzx,6}(\delta)+T_{xyzx,7}(\delta)+T_{xyzx,8}(\delta)
  = \frac{T(\delta)}{(n-1)(n-2)(n-3)(n-5)}\cdot\\
  \fl\quad  \Big(-7 \, n^{4} + 22 \, n^{3} + 75 \, n^{2} + 4 \, (2 \, n^{3} + 3 \,
n^{2} - 11 \, n + 6) \alpha_1 - 178 \, n + 88\\
  \fl\quad\phantom{\Big(}+ 2 \, (n^{3} + 5 \, n^{2} - (n^{2} + 11 \, n - 4)
\alpha_1 - 46 \, n + 18) X_{2}\\
\fl\quad\phantom{\Big(}-4 \, (n^{2} - (n + 3) \alpha_1 - 2 \, n
- 11) X_{3}
+2 \, X_{4} (2 \, n - 2 \, \alpha_1 -
    9)\Big).
}
\end{equation}
\begin{equation}
  \label{eq:Txyzx9}
  \eqalign{
  \fl T_{xyzx,9}(\delta) + T_{xyzx,10}(\delta) + T_{xyzx,11}(\delta) + T_{xyzx,12}(\delta)=\frac{T(\delta)}{(n-1)(n-2)(n-3)(n-4)(n-5)}\cdot\\
  \fl\quad \Big(-2 \, (2 \, n^{3} + 3 \, n^{2} - 11 \, n + 6) (n -
3) + 2 \, (2 \, n^{3} + 3 \, n^{2} - 11 \, n + 6)
\alpha_1\\
\fl\quad\phantom{\Big(}+((n^{2} + 11 \, n - 4) (n - 3) -
(n^{2} + 11 \, n - 4) \alpha_1) X_{2}\\
\fl\quad\phantom{\Big(}-2 \, ((n + 3) (n - 3) - (n +
3) \alpha_1) X_{3}
+2 \, X_{4} (n - \alpha_1 -
3)\Big).
}
\end{equation}
\begin{equation}
  \label{eq:Txyzx13}
\eqalign{
  \fl T_{xyzx,13}(\delta) + T_{xyzx,14}(\delta) + T_{xyzx,15}(\delta) + T_{xyzx,16}(\delta)=\frac{T(\delta)}{(n-1)(n-2)(n-3)(n-5)}\cdot\\
  \fl\quad\Big(n^{4} + 10 \, n^{3} - 5 \, n^{2} - 22 \, n + 16) (n
- 4) - (n^{4} + 10 \, n^{3} - 5 \, n^{2} - 22 \, n +
16) \alpha_1\\
\fl\quad\phantom{\Big(}-6 \, ((n^{2} + 3 \, n - 2) (n - 4) -
(n^{2} + 3 \, n - 2) \alpha_1) X_{2}\\
\fl\quad\phantom{\Big(}+8 \, ((n + 1) (n - 4) - (n +
1) \alpha_1) X_{3}
-6 \, X_{4} (n - \alpha_1 - 4)\Big).
}
\end{equation}
Combining the above formulas results in 
\begin{equation}
  \label{eq:Txyzx}
  \fl T_{xyzx}(\delta) =
  T(\delta)\ \frac{-10 \, n^{3} - 12 \, n^{2} + 50 \, n - 28 + 3 \, (n^{2} + 9 \, n - 4) X_{2} -2 \, X_{3} (3 \, n + 7) + 6 \, X_{4}}{(n-1)(n-2)(n-3)}.
\end{equation}

\section*{References}

\bibliographystyle{unsrt} 
\bibliography{animals,math,mertens}

\end{document}